\documentclass{article}
\usepackage{amsthm}
\begin{document}
\newtheorem{defn}{Definition}
\newtheorem{theorem}{Theorem}
\newtheorem{lemma}{Lemma}
\newtheorem{cor}{Corollary}
\newtheorem{prop}{Proposition}
\newtheorem{conj}{Conjecture}
\title{Bergman-Szeg\H{o} asymptotic formulas and the strip problem}
\author{Mark G. Lawrence}
\maketitle
\section{Introduction}
In an important paper, \cite{Tu},  Tumanov showed that if a continuous function $f$ on the closed strip  strip $ S =\{z=x+iy: |y|\le 1\}$ has the property that if the restriction of $f$ to the circle $|z-t|=1$ extends holomorphically to $|z-t|<1$ for all $t\in \bf R$, then 
$f$ itself must be holomorphic.
The proof involves complexification. One wants to study the collection of all holomorphic extensions as one object; by lifting to $\bf C^2$ in a canonical way, one obtains a CR function on a Levi-flat CR manifold. The details are discussed below in the proofs in this paper. The method of complexification for these problems was explored earlier in papers of Agranovsky and Globevnik \cite{AgGl}

The general type of problem is called the study of the 1-dimensional extension property. In the strip problem, you want to know if a function is holomorphic. One can also ask if a function defined on a boundary in $\bf C^n$ has a holomorphic extension based on extensions on a thin set of slices. A very general paper which covers questions of both types, for real analytic functions, is due to Agranovsky \cite{Ag}.

Our interest in the strip problem is for two reasons. First, this is a simplified relative of the CR Hartogs problem, \cite{La4}, which is to tell when a function defined on a convex boundary in $\bf C^n$ has a holomorphic extension, based on holomorphic extension on slices by affine lines parallel to the coordinate axes. 
The second reason was our hope that by proving a theorem for $L^p$ functions, one could discover new function theory.
In this paper, we show how to realize the second goal in part. 
Our main result is to show that a weighted Bergman projection on the strip has an asymptotic expression involving Szeg\H{o} projections on sliding curves. This theorem is valid for rather general perturbations of ellipses (the same as in \cite{La4}). 
We also fill in a gap in the literature by proving a weighted $L^p$ strip theorem for circles. Finally, we suggest further directions for study as well as some open questions. 

The strip problem is quite specialized, so it's fair to ask why attention is focused on it. We remark that the theorems here easily translate to conformal annuli, where the curve family in the annulus is the image of a family on the strip by the universal covering map.  For the author, the strip with translated curves is a model case where computation is relatively straightforward. A general theorem for planar domains would involve a nice family of simple closed curves, where the family is 1-dimensional, but every point in the domain is contained in two different curves. In this case we expect that an analogue of the strip theorem is true. In order to prove our theorems about certain weighted Bergman projections on a  domain, it turns out that we need the analog of the strip theorem---i.e. that if a function extends holomorphically to the interior of each simple closed curve in the given family. There is an unknown issue about smoothness. Everything in the literature that the author is aware of depends on real analytic families of curves, and often algebraic. N.B. for the strip problem one could state a theorem for real analytic perturbations of an ellipse without much difficulty.
Is the strip theorem true for a sliding $C^2$ smooth strictly convex curve? One would guess yes, possibly requiring $C^k$ smoothness for some $k>2$. Methods would have to be completely different. 

In order to get the new results, certain technical improvements to \cite{La3} are required. Once the improved strip theorem is known, the main theorems follow rather easily. In particular the  Bergman-Szeg\H{o} formula is a simplified version of Corollary 1 in \cite{La4}.

\begin{defn} Let $L^p(S,\alpha)=\{f(z), z\in S: \int_S|f(z)|^p\frac{dA(z)}{(1-y^2)^\alpha}<\infty\}$ , $p>1$. 
\end{defn}
Then $f\in L^p(S,p,\alpha)$ iff $\int_{-\infty}^{\infty}(\int_{C_t}|f(z)|^p\frac{ds}{(1-y^2)^{\alpha-\frac{1}{2}}})dt<\infty$. 
One can also define $L^p_{loc}(S,\alpha)$, where "loc" means in $\bf C$---i.e. $\int_{-M}^M\dots <\infty$ for all $M$. 
To prove a strip theorem, it suffices to work in $L^p_{loc}(S,\alpha)$. For the applications to function theory one needs $L^p(S,\alpha)$. 

For $p>2$, the strip problem is known to have a positive solution for functions in 
Here is our improvement, which includes the original theorem. The curve $C$ here is either a curve for which \cite{La3} applies, or the circle. $C_t$ is the translate by $t$ real units.

\begin{theorem}
Let $f$ be in $L^p(S,\alpha)$, $p>2-\alpha$, $\frac{1}{2}\le \alpha <1$. 
If for almost every $t\in \bf R$, 
$f|_{C_t}$ extends holomorphically to $|z-t|<1$, then $f$ is holomorphic.
\end{theorem}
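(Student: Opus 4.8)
The plan is to use complexification, as the author indicates is the established method for strip problems. First I would lift the problem to $\mathbf{C}^2$: for each $t$, the holomorphic extension $F_t$ of $f|_{C_t}$ to the disk $|z-t|<1$ is a single holomorphic function on that disk. The key idea is to assemble these into one object. Parametrize the slices so that the graph $\{(z,t): z\in \overline{D_t}\}$ (where $D_t$ is the open disk bounded by $C_t$) becomes a domain in $\mathbf{C}^2$, and the boundary $\{(z,t): z\in C_t\}$ is a Levi-flat real hypersurface foliated by the complex disks. The extensions $F_t(z)$ patch together into a function $F$ on this domain that is holomorphic on each leaf; the hypothesis on $f$ says that the boundary values of $F$ agree with $f$, turning $f$ into (the boundary data of) a CR function on the Levi-flat hypersurface $M$.

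Once this geometric reformulation is in place, the proof reduces to showing that a CR function on $M$ with the given integrability actually extends holomorphically across $M$ into a fixed two-sided neighborhood, which then forces $f$ to be holomorphic in $z$. The strategy here is to exploit the translation structure: since $C_t$ is the real translate of $C_0=C$ by $t$, the family is invariant under the real flow $z\mapsto z+s$, and this symmetry lets one reduce the analysis to understanding one Szeg\H{o} projection on $C$ and how it varies as the curve slides. I would next set up the relevant Szeg\H{o} projections $P_t$ on each $C_t$ and study the operator obtained by integrating against $t$; the integrability condition $f\in L^p(S,\alpha)$ with the stated weight is exactly what makes the boundary integrals over the $C_t$ finite (as the displayed equivalence in the excerpt records), so the Fubini-type interchange and the resulting fiber integrals are all well-defined.

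The main technical work is the improvement over \cite{La3}: extending the conclusion from the previously known range (the excerpt notes $p>2$ was known) down to $p>2-\alpha$ with the weight exponent $\tfrac12\le\alpha<1$. I expect to handle this by a careful weighted estimate on the Szeg\H{o} kernels of the sliding curves, showing that the extension operator is bounded on the weighted space $L^p(S,\alpha)$ precisely in the range $p>2-\alpha$. Because $C$ is either a curve to which \cite{La3} applies or a circle, I can treat the circle case by explicit computation with the Poisson/Szeg\H{o} kernel of the disk and transfer the general convex case by the perturbation arguments already available from \cite{La3}; the weight $(1-y^2)^{-\alpha}$ is adapted to the vanishing of $\operatorname{Im} z$ at the two boundary lines $y=\pm1$, where the slices become tangent, and this is where the interplay between $p$ and $\alpha$ is sharp.

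The step I expect to be the genuine obstacle is the endpoint/boundary behavior near $y=\pm1$, where the disks $C_t$ touch the edges of the strip and the Szeg\H{o} kernels degenerate. Controlling the weighted $L^p$ norm uniformly as the curves slide past these tangency points—so that a function in $L^p(S,\alpha)$ with $p>2-\alpha$ still forces the CR/holomorphic extension to glue consistently across all leaves—is the crux, and it is exactly the point at which the weaker hypothesis (smaller $p$, compensated by the weight $\alpha$) must be paid for by a more delicate kernel estimate than in the unweighted $p>2$ case of \cite{La3}. Once that uniform estimate is secured, the extensions $F_t$ agree on overlaps, define a single holomorphic function, and $f$ is holomorphic, completing the argument.
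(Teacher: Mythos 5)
Your opening move---complexify, lift $f$ to a CR function on a Levi-flat set foliated by the holomorphic extensions---does match the paper's first step, although the paper's lift is not the graph over $(z,t)$ you describe: for the circle it is $M=\cup_t V_t$ with $V_t=\{(z,w): 0<|z-t|<1,\ w=\frac{1}{z-t}+t\}$, i.e.\ $w$ is the Schwarz-reflection variable, and the decisive structure is the fibration $z\mapsto M_z$ whose fibers are simple closed curves in the $w$-plane. From there, however, your proposal substitutes a program of Szeg\H{o}-kernel operator bounds for the actual argument, and this is where it breaks. Boundedness of Szeg\H{o}/extension operators on $L^p(S,\alpha)$, even if proved exactly for $p>2-\alpha$, supplies no mechanism forcing the leafwise extensions $F_{t_1}$ and $F_{t_2}$ to agree on the overlap $D_{t_1}\cap D_{t_2}$: the extensions agree with $f$ only on the boundary circles, and their interior consistency is precisely the content of the theorem, not something an operator norm can deliver (``the extensions $F_t$ agree on overlaps'' is asserted, never derived). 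Worse, the route is circular relative to this paper's architecture: the operator $\pi$ built from Szeg\H{o} projections in Section 3 has $\|\pi f\|<\|f\|$ for non-analytic $f$ only because the strip theorem is already known---the paper says explicitly ``We are using the truth of the strip theorem in this step''---so the Bergman--Szeg\H{o} formulas are downstream consequences of Theorem 1, not tools for proving it.

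Two concrete steps of the paper's proof are missing. First, the Lewy-type extension: the hypothesis $p>2-\alpha$ enters not through kernel bounds but through a pointwise weighted Hardy-space growth estimate for each leafwise extension, $|f(z)|\le L_{\beta,t}(f)\,|i-(z-t)|^{\beta/p}/(\mathrm{dist}(z,\partial D_t))^{1/p}$ with $\beta=\alpha-\frac{1}{2}$, which is fed into the Lemma 5--type integral $\int_0^{\sqrt t}\frac{(x^2+t^2)^{\beta/2p}}{\sqrt t\,(\sqrt t-x)^{1/p}}dx\le K t^{\frac{1}{2}+\frac{\beta}{2p}-\frac{1}{p}}\rightarrow 0$; this is exactly the control at the tangency points $y=\pm 1$ that you correctly identify as the crux but then defer to an unspecified ``more delicate kernel estimate.'' It yields holomorphic extension of the lifted CR function $F$ to the filled-in domains $\Omega_{\pm}$. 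Second, and entirely absent from your sketch, is the propagation step showing the extended $F$ is independent of $w$: for the circle, as $z$ approaches ${\bf R}$ the fibers open up into half-planes, and the paper verifies the moment conditions $\int_{-\infty}^{\infty}F(x,t)\left(\frac{t-i}{t+i}\right)^n\frac{dt}{1+t^2}=0$ for a.e.\ $x$, concludes $F(x,\cdot)$ lies in $H^1$ (with respect to $\frac{dt}{1+t^2}$) of both the upper and the lower half-plane, glues these to an entire function, and finishes with the fact that no nonconstant entire function belongs to both $H^1$ spaces---the $L^p$ replacement for Tumanov's Liouville argument. Without this second step, even a successful Lewy extension gives only a holomorphic function of $(z,w)$ on $\Omega_{\pm}$ and no conclusion that $f$ itself is holomorphic.
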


\section{New proofs of the strip problem and a proof for the sliding circle}
The geometry of analytic continuation for the case of the circle is different than for the ellipse---easier, in fact, but different. In this section we present the modification of the strip problem for different weights and show how to deal with the case of the circle as well. This allows for a unified treatment of Bergman-Szeg\H{o} formulas in the later sections.

Now if $f\in L^p_{loc}(S,\alpha)$  and $f|_{C_t}$ has a holomorphic extension to $D_t$ for a.e. $t$, then one can adapt standard growth estimates for Hardy space functions. Set $\beta=\alpha-\frac{1}{2}$. Set $L_p(C_t,\beta)$ be the weighted $L^p$ space with norm $||f||_{p,\beta}=(\int_{C_t}|f(z)|^p\frac{ds}{(1-y^2)^{\beta}})^{1/p}$.

Here is the proof of Theorem 1. 

The proof follows the outline of \cite{La3} with the different  function spaces being used to improve Lemma 5. 
For perturbations of an ellipse, this is the only change. For the circle, analytic continuation in $\bf C^2$ which already appeared in \cite{Tu} needs to be justified for weighted integrable functions. 

For any  choice of $C$ as in the theorem, the following construction holds. Let $D_t$ denote the domain bounded by $C_t$. 
Let $C$ be defined by $P(z,\overline z)=0$.

We construct a singular CR manifold $M$ as follows. First set 
$$K=\cup_t\{(z,w): z\in D_t, P(z-t,w-t)=0\}.$$ For $z\in S$, let $K_z=\{w: (z,w)\in K\}$.  For each $y$, $0<y<1$, there is a $t_y$ such that for $z=x+iy$
$K_z$ is the union of solutions of $P(z-t,w)$ for $x-t_y\le t\le x+t_y$. 
in Theorem 2 of \cite{La3} it holds that when $z=x+iy$, $y\ne 0$.  $K_z$ is the union of continua $\gamma_j(z)$, $1\le j\le n$, where $n$ is independent of $z$; $\gamma_1$ is a simple closed curve with  $\overline w\in \gamma_1(t)$; and no $\gamma_j$, $j\ne 1$ crosses $\gamma_1$ for any value of $z$. Define $M$ by its cross-sections: $M_z=\gamma_1(z)$.

Finally, a measurable $CR$ function $F$ is defined on $M$ by setting $F(z,w)=f_t(z)$, where  $t$ is such that $P(z-t,w)=0$. By construction, from the assumptions of the theorem, this $t$ is unique, so $F$ is well-defined $a.e.$ on  $M$.

The proof of the theorem depends on two steps. First, a Lewy-type extension theorem is proved, showing that $F$ extends holomorphically to the domains $\Omega_{\pm}$, where $\Omega_+$ is the domain obtained by filling in the $\gamma_1$'s for each $z$ in the upper half strip, with $\Omega_-$ the same, for the lower half-strip. 
The second step is to use some analytic continuation methods which result in proving that $F$ does not depend on $w$, which means the original $f$ was holomorphic. 

For the first step, the only difference will be in Lemma 5, which we repeat now. 

\begin{lemma} Let $\gamma$ be smooth, strictly convex curve bounding the region $G$. Suppose $t=\min_{\gamma} Im(z)$ is obtained at $z=0$. Then $$\int_{G\cap (Im (z)=t)}\frac {1}{(dist(z, \partial G))^{1/p}}dx\rightarrow 0$$ 
as $t\rightarrow 0^+$ if and only if $p>2$. 
\end{lemma}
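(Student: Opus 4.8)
The plan is to reduce everything to a local computation near the lowest point $z=0$ of $\gamma$ and to show that, up to constants tending to $1$, the integral behaves like $t^{1/2-1/p}$. Since $\gamma$ is smooth and strictly convex and attains its minimum height at the origin, I would first write the lower arc of $\gamma$ near $0$ as a graph $y=\phi(x)$ with $\phi(0)=\phi'(0)=0$ and $\phi''(0)=\kappa>0$, the curvature at the bottom; Taylor's theorem gives $\phi(x)=\tfrac{\kappa}{2}x^{2}(1+o(1))$ as $x\to 0$. For small $t>0$ the slice $G\cap\{\,\mathrm{Im}\,z=t\,\}$ is then an interval $(x_-(t),x_+(t))$ with $x_\pm(t)=\pm\sqrt{2t/\kappa}\,(1+o(1))$, and the slope of $\gamma$ over this range is $O(\sqrt t)$, so the relevant arc is nearly horizontal.

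The crux is a two-sided comparison of $\mathrm{dist}(z,\partial G)$ with the \emph{vertical gap} $t-\phi(x)$, uniformly across the slice including near the endpoints where both quantities vanish. The upper bound $\mathrm{dist}(z,\partial G)\le t-\phi(x)$ is immediate because $(x,\phi(x))\in\partial G$. For the lower bound I would use convexity: if $(\xi^{*},\phi(\xi^{*}))$ is the nearest boundary point to $z=x+it$, the minimizing segment is normal to the tangent line $T$ at $\xi^{*}$, so $\mathrm{dist}(z,\partial G)=\mathrm{dist}(z,T)$; since $\phi$ lies above each of its tangents, the signed vertical height of $z$ over $T$ is at least $t-\phi(x)$, and dividing by $\sqrt{1+\phi'(\xi^{*})^{2}}$ gives
\[
\mathrm{dist}(z,\partial G)\ \ge\ \frac{t-\phi(x)}{\sqrt{1+m(t)^{2}}},\qquad m(t)=\max_{|\xi|\le x_+(t)}|\phi'(\xi)|=O(\sqrt t).
\]
One must also check that for $t$ small the nearest boundary point genuinely lies on this local arc rather than on a distant part of $\gamma$, which follows from $\mathrm{dist}(z,\partial G)\to 0$ near the endpoints together with a positive lower bound for the distance to the remainder of $\gamma$. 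I expect this uniform comparison near the degenerate endpoints to be the main obstacle.

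Granting the comparison, the integral satisfies
\[
I(t)\ \asymp\ \int_{x_-(t)}^{x_+(t)}\frac{dx}{(t-\phi(x))^{1/p}},
\]
with implied constants tending to $1$. Substituting $x=\sqrt{2t/\kappa}\,u$ and using $t-\phi(x)=t(1-u^{2})(1+o(1))$ turns this into
\[
I(t)\ =\ \bigl(1+o(1)\bigr)\,\sqrt{\tfrac{2}{\kappa}}\;t^{\,1/2-1/p}\int_{-1}^{1}\frac{du}{(1-u^{2})^{1/p}},
\]
where the last integral converges because $1/p<1$ (recall $p>1$). Hence $I(t)\sim c\,t^{1/2-1/p}$ with $c>0$, which tends to $0$ as $t\to 0^{+}$ exactly when $1/2-1/p>0$, i.e. $p>2$; at $p=2$ the limit is the positive constant $c$, and for $p<2$ it diverges. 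Finally I would justify that the $o(1)$ error from the Taylor remainder is harmless, e.g. by dominated convergence after the rescaling, which completes the equivalence.
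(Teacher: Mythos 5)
Your proposal is correct and follows essentially the same route as the paper (and the proof in \cite{La3} it invokes): localize at the bottom point, use the parabolic approximation to get slice half-width $\sim\sqrt{2t/\kappa}$ and the two-sided comparison $\mathrm{dist}(z,\partial G)\asymp t-\phi(x)\asymp\sqrt{t}\,(\sqrt{t}-|x|)$, and then the rescaled beta-type integral gives $I(t)\asymp t^{1/2-1/p}$, which is exactly the computation the paper performs (its displayed integrand $\sqrt{t}\,(\sqrt{t}-x)^{1/p}$ is a typo for $\bigl(\sqrt{t}(\sqrt{t}-x)\bigr)^{1/p}$, as its stated bound $Kt^{\frac{1}{2}+\frac{\beta}{2p}-\frac{1}{p}}$ confirms). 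Your explicit treatment of the endpoint comparability via supporting lines, and the observation that the two-sided bound yields both directions of the equivalence (limit $c>0$ at $p=2$, divergence for $p<2$), is exactly what the paper's argument relies on implicitly.
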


The lemma was applied in the following way. We show how to get the needed estimates at the top of the strip. The same method applies at the bottom.
Let $L_t=||f_{C_t}||_p$, then on $D_t$, 
$$|f(z)|\le \frac{L_t}{(dist(z,\partial D_t))^{1/p}}.$$

For $f\in  L^p_{\alpha, Loc}$, 
$$\int_{C_t}\frac{|f(z)|^p}{|i-(z-t)|^{\alpha -1/2}}|dz|\le \int_{C_t}\frac{|f(z)|^p}{|1-y|^{\alpha -1/2}}|dz|.$$

Thus, for almost all $t$, 
$\frac{f_{C_t}(z)}{(i-(z-t))^{\frac{2\alpha-1}{2p}}}  \in L^p(C_t)$ 
with $L_p$ norm bounded by $||f||_{p,\alpha,t}$. 

Using standard estimates for growth of $H^p$ functions, we obtain that on $D_t$,
$$|f(z)|\le \frac{L_{\beta, t}(f)|i-(z-t)|^{\beta/p}}{(dist(z,\partial D)^{1/p}},$$
where we have set $\beta=\alpha-1/2$.
Lemma 5 is applied by flipping  $C_t$ over, sending the top to 0.
Then, following the proof in \cite{La} we need to evaluate $\int_0^{\sqrt t}\frac{(x^2+t^2)^{\beta/2p}}{\sqrt t (\sqrt t-x)^{1/p}}dx$. 
The numerator can is bounded by $Kt^{\frac{\beta}{2p}}$, which comes out of the integral. Thus, end up with the integral being bounded by $Kt^{\frac{1}{2}+\frac{\beta}{2p}-\frac{1}{p}}$, which tends to 0 as $t\rightarrow 0$ when $p>2-\beta$. 
This estimate allows the proof of analytic continuation to $\Omega_{\pm}$ to go through for the new case. 

For $C$ which is the perturbed ellipse, the rest of the proof from \cite{La3} goes through essentially unchanged---the functions have better estimates, so a fortiori the original proofs work. We remark that this part of the proof uses $L^1$ independent of $p$. The same is true for the circle as outlined below.
Now for the case of the circle:

The original strip theorem was proved for $S=\{z:|y|<1\}$, $C_t=\{z:|z-t|=1\}$, for functions continuous on the closed strip.
The method of complexification used there is what the author modified for his own work. What is written here  about  is  equivalent to Tumanov's proof, although the presentation is different. 
Let $V_t=\{(z,w):0<|z-t|<1, w=\frac{1}{z-t}+t\}$ and let
$M=\cup_{t\in {\bf R}}V_t$. The fiber $M_z$ over $z\in S$ is a simple closed curve except for $z\in {\bf R}$ when it is the real axis. 
Let $\Omega_z$ be the domain obtained by filling in $M_z$ for $z\notin \bf R $. As in the case of the ellipse, the proof has two parts.  First, one shows that the lifted CR function $F$ extends holomormophically to the $\Omega=\cup \Omega_z$. This is done by following a Hans Lewy argument. This step goes the same for the circle as for the ellipse, including in the new, weighted case. 

The second step is to apply an analytic continuation argument to obtain a  holomorphic functions defined in domains above a.e. $x\in \bf R$. 
 As $z$ tends to the real axis from above and below, one gets a limiting domain, which when filled in across the real axis in the vertical fiber, is actually the whole plane. Analytic continuation across the real axis is true because Cauchy's theorem holds. For a continuous function, one then uses Liouville's theorem to show the function is constant on almost every fiber over $\bf R$, then a boundary value plus uniqueness arguments shows that the  holomorphic function on $\Omega$ is independent of $w$. This finishes a proof of the strip theorem. Instead of Liouville's theorem, we show functions are in certain $H^1$ spaces on the upper and lower half planes, and from this deduce the functions are constant. 
The second step in the proof of the strip theorem does not use the stronger weight on the boundary of the strip. 

Here are the details. For each $z=x+iy\in S$ which is not real, 
$M_z$ is parametrized by $w=\frac{1}{z-t}+t, -\sqrt{1-y^2}+x<t<x+\sqrt{1-y^2}$. This is a simple closed curve. In the case of ellipses and suitable perturbations, branching is crucial for understanding what happens. Here the situation is simpler.
Let $\Omega$ denote the open set in $Im(z)\ne 0$ which is obtained by filling in the $M_z$'s. 
Following \cite{La3} we get analytic continuation to $\Omega$ for $f\in L^2_\alpha(S)$.
The key estimates are for $$\int_{x=a}^{x=b}\int_{x-\sqrt{1-y^2}}^{x+\sqrt{1-y^2}}|F(x+iy),t|dtdx.$$
Exactly the same estimates hold for the case of the circle. The difference is relating these estimates to integrals over $M_z$'s. 
For the ellipse case of the strip problem, $|dw|$ on $M_z$ and $dt$  are comparable. For the case of the circle $\frac{|dw|}{1+v^2}$ and $|dt|$ are comparable. As $y\rightarrow 0$ from above or below, the fibers open up into the real axis, with the inside going to the lower and upper half places for $y>0$ and $y<0$ respectively. 
For the ellipse moments with $w^n$ were evaluated. For the circle we use moments adapted to the half plane. For the upper half plane, use $(\frac{w-i}{w+i})^n$, and the reciprocal for the lower half plane.
As in \cite{La3}, dominated convergence then shows that 
for almost every $x$, $\int_{-\infty}^{\infty}|F(x,t)|\frac{dt}{1+t^2}<\infty$.
By using conformal maps to the interiors of $\gamma(t)$, each sending 0 to $i$, we can show that moment condition
$$\int_{-\infty}^{\infty}F(x,t)(\frac{t-i}{t+i})^n \frac{dt}{1+t^2}=0$$
holds for almost every $x$. 

These moment conditions guarantee that $F(x,t)\in H^1$, where $H^1$ is with respect to the measure $\frac{dt}{1+t^2}$. 
The functions on the upper and lower half planes glue together to become an entire function.
Finally, there is no entire non-constant entire function which is in this $H^1$ on both the lower and upper half planes.

\section{Bergman projection adapted to the strip problem}
For this section, $C$ denotes a smooth strictly  convex curve in $|Im(y)|\le 1$
whose top and bottom points touch the top and bottom of the closed strip.
$C$ should  be symmetric with respect to the $x$-axis, and have a symmetry about a vertical axis as well. 
Let $C_t$, $C_t^+$, $C_t^-$ denote, respectively, the horizontal translate of $C$ by $t$ units,  and the right and left  halves of $C_t$.
Then we can relate integration over the curves and the strip as previously.
\begin{lemma} There is a smooth positive function $\lambda(y)$ on $[-1,1]$ such that\
$$\int_S f(z)\frac{\lambda dx dy}{\sqrt{1-y^2}}=\int_{-\infty}^{\infty}\left(\int_{C_t^+} f(z)ds\right)dt=\int_{-\infty}^{\infty}\left(\int_{C_t^-}f(z)ds\right)dt.$$
Here $ds$ is arc length. 
\end{lemma}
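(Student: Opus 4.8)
The plan is to read the identity as a coarea/Fubini formula for the foliation of the strip $S$ by the right (resp.\ left) halves of the translated curves, and then to verify separately that the resulting weight extends smoothly and positively to the two singular heights $y=\pm 1$. First I would record a graph representation. Since $C$ is strictly convex with its top and bottom points on the lines $y=\pm 1$, its right half $C^+$ is a graph $x=g(y)$ over $-1<y<1$, with $g$ smooth on the open interval; strict convexity forces $g'(y)\to -\infty$ as $y\to 1^-$ and $g'(y)\to +\infty$ as $y\to -1^+$, corresponding to the horizontal tangents of $C$ at its extreme points. The translate $C_t^+$ is then the graph $x=g(y)+t$, so each $z=x+iy\in S$ lies on exactly one $C_t^+$, namely $t=x-g(y)$; equivalently the shear $\Phi(x,y)=(x-g(y),y)$ is a diffeomorphism of $S$ onto $\mathbb{R}\times(-1,1)$ with unit Jacobian.

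Parametrizing $C_t^+$ by $y$ gives $ds=\sqrt{1+g'(y)^2}\,dy$, so by Tonelli and the change of variables $x=g(y)+t$ (with $dx=dt$ at fixed $y$),
$$\int_{-\infty}^{\infty}\Big(\int_{C_t^+}f\,ds\Big)\,dt=\int_{-1}^{1}\int_{-\infty}^{\infty}f(x,y)\sqrt{1+g'(y)^2}\,dx\,dy=\int_S f(z)\sqrt{1+g'(y)^2}\,dx\,dy,$$
valid first for nonnegative measurable $f$ and hence for $f$ integrable against this measure. This dictates the definition $\lambda(y)=\sqrt{1-y^2}\,\sqrt{1+g'(y)^2}$, which yields the first equality. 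For the second equality I would invoke the symmetry of $C$ about the vertical axis: the left half is the graph $x=-g(y)+t$, whose arc-length factor $\sqrt{1+g'(y)^2}$ is identical, so the identical computation produces the integral over the $C_t^-$. (The symmetry about the $x$-axis additionally makes $\lambda$ even in $y$, though this is not needed for the identity.)

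The substantive point, and the step I expect to be the main obstacle, is that $\lambda$ extends to a \emph{smooth positive} function on the closed interval $[-1,1]$ despite the blow-up of $g'$: the weight $\sqrt{1-y^2}$ must cancel that blow-up exactly. Near the top I would switch to the arc-length-neutral parametrization by $x$, writing $C$ locally as $y=h(x)$ with $h$ smooth, $h(0)=1$, $h'(0)=0$, and $h''(0)<0$ by strict convexity. The vertical symmetry makes $h$ even, so $h'(x)=x\phi(x)$ and $1-h(x)=x^2\psi(x)$ with $\phi,\psi$ smooth, even, and $\phi(0),\psi(0)\neq 0$. Using $\sqrt{1+g'(y)^2}=\sqrt{1+h'(x)^2}\,/\,|h'(x)|$ together with $\sqrt{1-y^2}=|x|\sqrt{\psi(x)\,(1+h(x))}$, the factor $|x|$ cancels and
$$\lambda=\frac{\sqrt{\psi(x)\,(1+h(x))\,(1+h'(x)^2)}}{|\phi(x)|},$$
which is a smooth, even, strictly positive function of $x$ near $x=0$ (with nonzero limiting value $1/\sqrt{|h''(0)|}$).

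It remains to pass back to the variable $y$. Since $1-y=x^2\psi(x)$ and $\psi$ is even, the map $u\mapsto u\,\psi(\sqrt u)$ is smooth with nonzero derivative at $u=0$, so $x^2$ is a smooth function of $1-y$ near the endpoint; as $\lambda$ is a smooth function of $x^2$, it follows that $\lambda$ is smooth and positive up to $y=1$. The same argument applies at $y=-1$, and on the open interval smoothness and positivity of $\lambda$ follow directly from those of $g$. This identifies the claimed weight $\lambda$ and completes the proof.
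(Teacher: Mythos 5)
The paper states this lemma without proof --- it appears at the start of Section 3 with only the remark ``Then we can relate integration over the curves and the strip as previously,'' and the text moves straight on to the Szeg\H{o} operators --- so there is no argument of the paper's to compare yours against; your write-up supplies the missing proof, and it is correct. The coarea/shear computation is the natural route: parametrizing $C_t^+$ as the graph $x=g(y)+t$, applying Tonelli, and substituting $x=g(y)+t$ at fixed $y$ forces $\lambda(y)=\sqrt{1-y^2}\,\sqrt{1+g'(y)^2}$, and the left--right symmetry of $C$ gives the same weight for the $C_t^-$ integral (without that symmetry the two halves would generically produce different weights, so you are right to invoke it). The genuinely nontrivial point is exactly the one you isolate: smoothness and positivity of $\lambda$ at $y=\pm1$, where $g'$ blows up. Your cancellation via $1-h(x)=x^2\psi(x)$ and $h'(x)=x\phi(x)$ is correct, including the limiting value $1/\sqrt{|h''(0)|}$, and the passage back to the variable $y$ is handled properly by Whitney's even-function theorem (an even smooth function is a smooth function of $x^2$) together with the inverse function theorem applied to $u\mapsto u\,\psi(\sqrt{u})$ --- a step that is easy to fudge and that you did not.

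One caveat worth making explicit: your argument requires $h''(0)\neq 0$, i.e.\ nonvanishing curvature at the top and bottom points of $C$. If ``strictly convex'' were read weakly (no line segments), the lemma would be false: for a curve behaving like $y=1-x^4$ near the top one gets $\sqrt{1-y^2}\sim\sqrt{2}\,x^2$ while $\sqrt{1+g'(y)^2}\sim 1/(4x^3)$, so $\lambda\sim c/x\to\infty$. You assert $h''(0)<0$ ``by strict convexity,'' which is the standard positive-curvature reading and is consistent with the paper's intended class of curves (perturbations of ellipses as in its reference [La3]), but a careful version of the proof should state this hypothesis, since the lemma genuinely fails without it.
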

 
 Next we define Szeg\"{o} and partial Szeg\"{o} operators. The latter is a new concept which is adapted to the strip problem. 
 
 For each $t$, $T_t$ is the Szeg\"{o} projection in $L^2(C_t, \frac{ds}{(1-y^2)^\beta})$. 
 $T_t^{\pm}f$ denotes the restriction of $T_tf $ to $C_t^{\pm}$.
 At a point $z$, there are  two values of $t$, $t_1$ and $t_2$ corresponding to translates of $C$ containing $z$.
 Set $\pi f(z)=\frac{1}{2}(T_{t_1}f(z)+T_{t_2}f(z))$.
 
 \begin{lemma} 
 $\pi$ is a self adjoint operator  on $L^2(S, \frac{\lambda(y) dx dy}{(1-y^2)^{\alpha}})$. 
\end{lemma}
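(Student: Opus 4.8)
The plan is to transfer the self-adjointness of the fiberwise projections $T_t$ up to the strip, using Lemma~3 as the dictionary between the strip measure and the family of curve measures. Throughout set $\beta=\alpha-\frac12$ and $d\mu_S=\frac{\lambda(y)\,dx\,dy}{(1-y^2)^\alpha}$, and write $\langle\cdot,\cdot\rangle_t$ for the inner product of $L^2(C_t,\frac{ds}{(1-y^2)^\beta})$, so that each $T_t$ is an orthogonal, hence self-adjoint, projection for $\langle\cdot,\cdot\rangle_t$.

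First I would set up the incidence picture. Let $N=\{(t,z):z\in C_t\}$, with the two sheets $N^{+}=\{(t,z):z\in C_t^{+}\}$ and $N^{-}=\{(t,z):z\in C_t^{-}\}$, and equip $N$ with $d\nu=\frac{ds\,dt}{(1-y^2)^\beta}$. Because $C$ is symmetric about a vertical axis, each $z=x+iy\in S$ lies on exactly two translates, on the right half of one and the left half of the other; thus the projection $(t,z)\mapsto z$ restricts to a bijection on each of $N^{\pm}$. Taking $f(z)=\Phi(z)/(1-y^2)^\beta$ in Lemma~3 shows that each of these bijections pushes $d\nu$ forward to $d\mu_S$; consequently, for any $\Phi$ on $S$, $\int_S\Phi\,d\mu_S=\int_{N^{+}}\Phi\,d\nu=\int_{N^{-}}\Phi\,d\nu=\frac12\int_N\Phi\,d\nu$. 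Applying this to $\Phi=\pi f\,\overline g$ gives the basic identity $2\langle\pi f,g\rangle_{\mu_S}=\int_N \pi f(z)\,\overline{g(z)}\,d\nu$.

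The key device is the involution $\iota(t,z)=(t'(z),z)$ of $N$ that interchanges the two translates through $z$; here, for $(t,z)\in N$, exactly one of the two translates through $z$ is $C_t$, and $t'(z)$ names the other. Since $\iota$ commutes with the projection to $S$ and swaps $N^{+}\leftrightarrow N^{-}$, the previous paragraph shows $\iota$ preserves $d\nu$. Now on the fiber $C_t$ one has $\pi f(z)=\frac12\big(T_tf(z)+T_{t'(z)}f(z)\big)$, so
$$2\langle\pi f,g\rangle_{\mu_S}=\frac12\int_N T_tf(z)\,\overline{g(z)}\,d\nu+\frac12\int_N T_{t'(z)}f(z)\,\overline{g(z)}\,d\nu.$$
In the second integral I would change variables by $\iota$, which is measure-preserving; because $\iota$ reverses the roles of the two translates through each point, the factor $T_{t'(z)}f(z)$ turns into $T_tf(z)$, so the two integrals agree. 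Hence $2\langle\pi f,g\rangle_{\mu_S}=\int_N T_tf(z)\,\overline{g(z)}\,d\nu=\int_{-\infty}^{\infty}\langle T_tf,g\rangle_t\,dt$, where the last equality merely regroups $N$ into its fibers $C_t$.

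Finally, self-adjointness of each $T_t$ gives $\langle T_tf,g\rangle_t=\langle f,T_tg\rangle_t$, and running the same computation with $f$ and $g$ exchanged (and conjugating) yields $2\langle f,\pi g\rangle_{\mu_S}=\int_{-\infty}^{\infty}\langle T_tf,g\rangle_t\,dt$ as well; comparing the two establishes $\langle\pi f,g\rangle_{\mu_S}=\langle f,\pi g\rangle_{\mu_S}$. Boundedness of $\pi$ on $L^2(S,\mu_S)$ follows from the same identity by Cauchy--Schwarz in $t$ together with $\int\|h\|_t^2\,dt=2\|h\|_{\mu_S}^2$, giving $\|\pi\|\le 1$. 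The one genuine obstacle is conceptual rather than computational: each $T_t$ is self-adjoint only as an operator on the full curve $C_t$, whereas $\pi$ combines values coming from two different curves at each point, so a naive fiberwise symmetry does not apply directly. The symmetrization built into the definition of $\pi$ is exactly what repairs this mismatch, and the mechanism is the measure-preserving involution $\iota$, whose invariance is in turn precisely the two equal representations furnished by Lemma~3.
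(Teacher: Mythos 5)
Your proof is correct and follows essentially the same route as the paper's: both convert the strip inner product into $\int_{-\infty}^{\infty}\langle T_tf,g\rangle_t\,dt$ via the change-of-variables lemma for $\lambda(y)$ and then invoke fiberwise self-adjointness of each Szeg\H{o} projection $T_t$. Your incidence set $N$ and measure-preserving involution $\iota$ merely make explicit the $C_t^{\pm}$ double-counting and the factor of $2$ that the paper's chain of equalities leaves implicit, so this is a presentational refinement rather than a genuinely different argument.
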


 \begin{proof}
 Let $f,g\in L^2(S, \frac{\lambda(y) dx dy}{(1-y^2)^{\alpha}})$
 
 Then $$\int_S (\pi f)g \frac{\lambda(y)dx dy}{(1-y^2)^{\alpha}}=$$
 $$\int_{-\infty}^{\infty}\left(\int_{C_t} (T_tf(z))\overline{g(z})\frac{ds}{(1-y^2)^{\beta}}\right)dt=$$

 $$\int_{-\infty}^{\infty}\left(\int_{C_t} f(z)\overline{ T_t(g(z)}
\frac{ds}{(1-y^2)^{\beta}}\right)dt=$$
 
 $$\int_S f(z)\overline{\pi g(z)}\frac{\lambda(y) dx dy}{(1-y^2)^{\alpha }}.$$
 \end{proof}
 
 Obviously $||\pi||=1$. The norm is attained for analytic functions. 
 If any function is not analytic, then the norm of the Szeg\"{o} projection on some $C_t$'s is diminished, which decreases the norm of $\pi f$.
 We are using the truth of the strip theorem in this step.
 By a simple application of the spectral theorem, we have the following expression for the Bergman projection. Define $B_{\alpha}$ to be the Bergman projection operator for  $L^2(S, \frac{\lambda(y) dx dy}{(1-y^2)^{\alpha}})$. 
 \begin{theorem}
 Let $f\in L^2(S, \frac{\lambda(y)dx dy}{(1-y^2)^{\alpha}})$. Then $$\lim_{n\rightarrow \infty} \pi^nf\rightarrow B_{\alpha},f$$ with convergence in the strong operator topology. 
 \end{theorem}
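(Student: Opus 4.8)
The plan is to recognize $\pi$ as a positive self-adjoint contraction whose fixed-point space is \emph{exactly} the holomorphic subspace, and then to obtain the convergence of the powers $\pi^{n}$ directly from the spectral theorem. The only substantive input is the strip theorem (Theorem 1); the rest is the standard power-iteration argument for a positive contraction.

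First I would record the spectral data of $\pi$. By Lemma 3 it is self-adjoint, and the computation in that proof, together with the fact that each $T_{t}$ is an \emph{orthogonal} projection on $L^{2}(C_{t},\,ds/(1-y^{2})^{\beta})$, shows that $\langle(I-\pi)f,f\rangle$ is a positive multiple of
\[
\int_{-\infty}^{\infty}\|(I-T_{t})f\|_{C_{t}}^{2}\,dt \ge 0,
\]
while $\langle \pi f,f\rangle$ is the corresponding positive multiple of $\int_{-\infty}^{\infty}\|T_{t}f\|_{C_{t}}^{2}\,dt\ge 0$. Hence $0\le\pi\le I$ and the spectrum of $\pi$ lies in $[0,1]$; in particular there is no spectral mass at $-1$, which is exactly what prevents $\pi^{n}$ from oscillating.

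Next I would identify the eigenspace at the top of the spectrum. Since $0\le\pi\le I$, the equality $\langle\pi f,f\rangle=\|f\|^{2}$ is equivalent to $\pi f=f$ (apply it to the positive square root $(I-\pi)^{1/2}$). By the displayed formula this equality forces $(I-T_{t})f=0$, i.e.\ $T_{t}f=f|_{C_{t}}$, for almost every $t$; equivalently $f|_{C_{t}}$ lies in the Szeg\H{o} space and so extends holomorphically to $D_{t}$ for a.e.\ $t$. That is precisely the hypothesis of Theorem 1, so $f$ must be holomorphic on $S$; and conversely every holomorphic element of the space is fixed by each $T_{t}$ and hence by $\pi$. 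Thus $\ker(I-\pi)$ is exactly the closed subspace of holomorphic functions in $L^{2}(S,\lambda\,dx\,dy/(1-y^{2})^{\alpha})$, which is the range of $B_{\alpha}$. Consequently the spectral projection $E(\{1\})$ of $\pi$ onto this kernel coincides with $B_{\alpha}$.

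Finally I would apply the spectral theorem. Writing $\pi=\int_{[0,1]}\lambda\,dE(\lambda)$, hence $\pi^{n}=\int_{[0,1]}\lambda^{n}\,dE(\lambda)$, I split an arbitrary $f$ as $f=E(\{1\})f+g$ with $g=(I-E(\{1\}))f$. Then $\pi^{n}E(\{1\})f=E(\{1\})f=B_{\alpha}f$ for every $n$, while
\[
\|\pi^{n}g\|^{2}=\int_{[0,1)}\lambda^{2n}\,d\langle E(\lambda)g,g\rangle\longrightarrow 0
\]
by dominated convergence, because $\lambda^{2n}\to 0$ pointwise on $[0,1)$ against the finite measure $\langle E(\cdot)g,g\rangle$, which carries no atom at $1$. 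Therefore $\pi^{n}f\to B_{\alpha}f$ in norm for each $f$, i.e.\ $\pi^{n}\to B_{\alpha}$ in the strong operator topology. I expect the only genuinely non-formal step to be the identification of $\ker(I-\pi)$ with the holomorphic subspace, which is where the strip theorem does all the work; the positivity of $\pi$ (rather than merely $\|\pi\|=1$) is what guarantees convergence rather than oscillation of the iterates.
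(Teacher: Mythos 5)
Your proof is correct and takes essentially the same route as the paper: self-adjointness of $\pi$ (Lemma 3) plus the strip theorem (Theorem 1, applied with $p=2>2-\alpha$) to identify the fixed subspace of $\pi$ with the holomorphic subspace, and then the spectral theorem to conclude $\pi^{n}\to E(\{1\})=B_{\alpha}$ strongly. You also make explicit a detail the paper's sketch leaves implicit, namely that $0\le\pi\le I$ (not merely $\|\pi\|=1$) is what rules out spectral mass at $-1$ and hence oscillation of the iterates.
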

 
 The usual expressions for weighted Bergman projections are area integrals, of course. Here the expression for $\pi$ at a point involves only the Szeg\H{o} projection on two curves. As the power of $\pi$ increases, there is an averaging over curves which cover a larger and larger part of the strip. It's possible there is a singular integral expression for the operator $T^n$. If so, one might be able to prove convergence in $L^p, 1<p<\infty$. Our current methods do not reach this goal.

\section{Higher dimensional analogues of the strip theorem and Bergman-Szeg\H{o} formulas}

In $\bf C^{n+1}$ , $n>0$ with coordinates $(z_1, z'), z_1\in \bf C\}$,  consider a sliding family of smooth strictly convex domains, 
$D_t=\{z|z-(t,0)\in D_0\}$. Let $\Omega=\cup_t D_t$. The question is, given a function $f$ on $U$ such that $f|_{\partial D_t}$ extends holomorphically to $D_t$ for all  $t$ (or almost all if $f$ is integrable in some class), does $f$ have to be holomorphic. The first observation is that if $f\in C^1(U)$, the conclusion follows easily.
At a generic point of $p \in U$, the two $\partial D_t$'s which contain $p$ are transverse. Since $n>0$, we have Cauchy-Riemann equations holding on tangential directions. Since those directions span, we get the Cauchy-Riemann equations holding in all directions, so $f$ is holomorphic. 
This is in marked contrast to the strip theorem for $\bf C$, where even for real analytic functions the theorem is non-trivial. 
There's not an obvious way to go from the $C^1$ case to the weighted $L^p$ space directly, but by slicing we can apply the 1-dimensional theorem, for suitable $D$, after which a Bergman-Szeg\H{o} formula will hold automatically. 

Here is a theorem with spheres.

\begin{theorem} Let $z=(z_1, z'), z_1\in {\bf C}, z'\in {\bf C^{n-1}}$.  Let $B_t=\{|z-(t,0)|^2<1\}, S_t=\partial B_t$ and define 
$\Omega=\cup_t B_t$. With $dm$ the usual Lebesgue measure on $\Omega$,  set $d\omega_{\alpha}=\frac{dm}{(1-(y_1)^2 +|z'|^2))^{\frac{1}{2}+\alpha}}, \frac{1}{2}<\alpha<1$.
Suppose $f\in L^p(\Omega, d\omega_{\alpha}), p>2-\alpha$ extends holomorphically from $S_t$ to $B_t$ for a.e. $t$. 
Then $f$ is holomorphic on $\Omega$. 
\end{theorem}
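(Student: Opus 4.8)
The plan is to reduce the theorem to the one-dimensional circle strip theorem (Theorem 1) by slicing with complex lines parallel to the $z_1$ axis, and then to promote the resulting separate holomorphy in $z_1$ to joint holomorphy by matching $f$ against the holomorphic ball extensions on each slice. The key geometric fact is that a complex line $\{z'=c\}$ meets each ball $B_t$ in a disk, and this family of disks slides \emph{rigidly} in $z_1$ precisely because $B_t$ is a round ball translated in the real $x_1$ direction; for any other complex direction the radius of the intersection varies with $t$, so no one-dimensional strip theorem with translated curves would apply.

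First I would set up the slicing. Fix $c\in{\bf C}^{n-1}$ with $|c|<1$ and put $r=\sqrt{1-|c|^2}$. The slice $\{z'=c\}\cap\Omega$ is the strip $\{z_1:|{\rm Im}\,z_1|<r\}$, while $\{z'=c\}\cap B_t$ is the disk $\{|z_1-t|<r\}$ whose boundary circle lies on $S_t$; after the dilation $z_1\mapsto z_1/r$ this is exactly the unit strip swept by the unit circles $C_t$ of Section 2. Two hypotheses must be transported to the slice. For integrability I would first note that $1-(y_1^2+|z'|^2)\le 1$ on $\Omega$, so the weight with exponent $\frac{1}{2}+\alpha$ dominates the one with exponent $\alpha$; hence $f\in L^p(\Omega,d\omega_\alpha)$ already lies in $L^p$ for the smaller weight $(1-(y_1^2+|z'|^2))^{-\alpha}\,dm$. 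Disintegrating that weaker bound over $c$ by Fubini and rescaling by $r$ gives $f(\,\cdot\,,c)\in L^p(S,\alpha)$ for almost every $c$, now with \emph{slice} exponent $\alpha\in(\frac{1}{2},1)$ and with $p>2-\alpha$, which is exactly the range of Theorem 1. For the extension hypothesis, for almost every $t$ let $F_t$ be the holomorphic extension of $f|_{S_t}$ to $B_t$; its restriction $F_t(\,\cdot\,,c)$ is holomorphic on $\{|z_1-t|<r\}$ and has boundary values $f(\,\cdot\,,c)$ on the circle, so $f(\,\cdot\,,c)$ satisfies the sliding circle extension hypothesis on the slice. The circle case of Theorem 1 then yields that $f(\,\cdot\,,c)$ is holomorphic on $\{|{\rm Im}\,z_1|<r\}$, i.e. $\partial_{\bar z_1}f=0$, for almost every $c$.

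To pass from separate holomorphy in $z_1$ to full holomorphy I would use the extensions $F_t$ as a bridge. Fix a good $t$ and a good $c$ with $|c|<1$. Both $f(\,\cdot\,,c)$ and $F_t(\,\cdot\,,c)$ are holomorphic on the inscribed disk $D_{t,c}=\{|z_1-t|<r\}$ and have the same boundary values on $\partial D_{t,c}\subset S_t$; by uniqueness in the appropriate Hardy/Smirnov class they coincide on $D_{t,c}$. Letting $c$ range over its full measure set and using Fubini shows $f=F_t$ almost everywhere on $B_t$, so $f$ agrees with a genuinely holomorphic function on each $B_t$. Since the extensions all restrict to $f$ on overlaps and the $B_t$ cover $\Omega$, they patch to a single holomorphic function equal to $f$, which is the conclusion.

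I expect the uniqueness step of the last paragraph to be the main obstacle. The disk $D_{t,c}$ is internally tangent to the strip boundary at the two points $z_1=t\pm ir$, and it is precisely there that $f(\,\cdot\,,c)$ and $F_t(\,\cdot\,,c)$ may fail to be continuous up to $\partial D_{t,c}$; controlling their growth at these points so that equal boundary values force equality is where the weight and the hypothesis $p>2-\alpha$ are genuinely used, and this runs parallel to the $H^p$ growth estimates behind Lemma 5. The extra $\frac{1}{2}$ in the exponent of $d\omega_\alpha$ plays no role beyond making the hypothesis slightly stronger---the slicing uses only the exponent $\alpha$---and is there because $\frac{1}{2}+\alpha$ is the natural weight for the Bergman-Szeg\H{o} formula to follow.
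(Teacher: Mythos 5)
Your proposal follows essentially the same route as the paper's (very terse) proof: slice by the complex lines $z'=c$, apply the one-dimensional sliding-circle strip theorem (Theorem 1) on each slice, and then identify $f$ with the holomorphic ball extensions $F_t$ on each $B_t$ to obtain joint holomorphy on $\Omega$. Your added details---the comparison of the exponent $\frac{1}{2}+\alpha$ weight with the exponent $\alpha$ slice weight, the rescaling by $r=\sqrt{1-|c|^2}$, and the Smirnov-class uniqueness argument at the two tangency points---correctly fill in steps the paper leaves implicit, and you rightly locate where the hypothesis $p>2-\alpha$ is genuinely used.
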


Also, for $p=2$, you get a Bergman-Szeg\H{o} formula. 

\begin{proof}
 On slices of the form $z'=c$, the strip theorem for $\bf C$ applies. 
On each $B_t$, there is a holomorphic extension from $S_t$. Because the strip theorem applies on the slices, this holomorphic function is actually equal to $f$ on each $B_t$, so is holomorphic on $\Omega$.
\end{proof}

The next example is in the plane, but is derived from the author's CR Hartogs theorem. 
Set $\rho(z,w)=|z|^2+|w|^2-1+\epsilon(zw+\overline{zw})$ for some small $\epsilon>0$ and define $D=(\rho<0)$. 
Here is the theorem we need; it is covered by the main theorem of \cite{La4}. 
\begin{theorem} Let $f\in L^2(\partial D)$ and suppose that for almost every slice $(z=c)\cap\partial D$ and almost every slice $(w=k)\cap \partial D$, $f$ extends holomorphically to the corresponding slice of $D$. Then $f$ is CR---it extends holomorphically to $D$. 
\end{theorem}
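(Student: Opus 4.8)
The plan is to deduce the statement from the main theorem of \cite{La4}, so the first task is to recognize $D$ as an admissible small perturbation of the unit ball in $\mathbf{C}^2$ and to check that the two slice hypotheses are exactly the ones that theorem requires. Writing $\rho=|z|^2+|w|^2-1+2\epsilon\,\mathrm{Re}(zw)$, completion of the square shows that for each fixed $z=c$ the set $\{z=c\}\cap D$ is the disk $\{|w+\epsilon\bar c|^2<1-|c|^2(1-\epsilon^2)\}$, centered at $-\epsilon\bar c$, and symmetrically for each fixed $w=k$; thus the hypotheses that $f|_{\{z=c\}\cap\partial D}$ and $f|_{\{w=k\}\cap\partial D}$ extend holomorphically are precisely the two coordinate-slice extension hypotheses of the CR Hartogs theorem of \cite{La4}. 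Once $f$ is known to be CR, the final clause is the standard fact that an $L^2$ CR function on the connected, strictly pseudoconvex boundary of a bounded domain in $\mathbf{C}^2$ is the boundary value of a holomorphic function, via the Bochner--Martinelli jump formula; for $\epsilon$ small, $D$ is strictly convex, so this applies. Hence the whole weight of the proof lies in showing that $f$ is CR. I note that, unlike the sliding-domain situation discussed above, here even $C^1$ regularity does not trivialize the problem: the two coordinate slices through a point are complex lines, and a one-dimensional holomorphic extension along each does not by itself pin down the tangential Cauchy--Riemann derivative pointwise.

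For that core step I would follow the complexification philosophy used throughout this paper rather than treat \cite{La4} as a black box. First I would lift the two families of slice-extensions to a single CR function on an auxiliary Levi-flat manifold, exactly as the sliding-curve family was lifted to $M\subset\mathbf{C}^2$ in the strip problem above; the point of the perturbation $2\epsilon\,\mathrm{Re}(zw)$ is that it makes the slice-boundaries slide nontrivially, the $w$-disks having moving centers $-\epsilon\bar c$, so that in the complexified picture one meets a genuine translated family of perturbed ellipses rather than a concentric pencil. Second, a Lewy-type extension pushes this CR function into the two half-domains, just as $F$ was extended to $\Omega_\pm$ for the strip. Third, I would reduce the resulting rigidity, namely that the extension cannot depend on the auxiliary fiber variable, to the planar strip theorem, Theorem 1; the role of the improved range $p>2-\alpha$, $\tfrac12\le\alpha<1$ is precisely to accommodate the weighted slice integrals produced by an $L^2$ datum on $\partial D$, which is why the sharpened version rather than the classical continuous strip theorem is needed.

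The main obstacle, as in the one-variable case, is this last analytic-continuation step: showing that the separate extensions coming from the two transverse families are compatible, i.e. that the complexified holomorphic function is independent of the fiber variable. Having a function that is holomorphic in $w$ on each $\{z=c\}$-disk and holomorphic in $z$ on each $\{w=k\}$-disk is not enough to invoke Hartogs separate analyticity directly, because the two extensions are only known to agree on $\partial D$; forcing them to agree throughout $D$ is exactly the content of the strip theorem applied to the sliding family produced by the complexification. I expect the delicate point to be carrying this continuation through under only the $L^2$ hypothesis, where pointwise statements must be replaced by the $H^1$-membership and moment arguments used at the end of Section 2 rather than the Liouville-type argument available for continuous data.
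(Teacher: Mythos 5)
Your reduction is exactly the paper's route: the paper gives no independent proof of this statement, saying only that it ``is covered by the main theorem of \cite{La4},'' and your verification that the slices $\{z=c\}\cap D$ are disks $\{|w+\epsilon\bar c|^2<1-|c|^2(1-\epsilon^2)\}$ (so that for small $\epsilon$ the domain is strictly convex and the coordinate-slice hypotheses of \cite{La4} apply) merely fills in details the paper leaves implicit. Your subsequent sketch of the complexification proof behind \cite{La4} is supplementary rather than part of the paper's argument, and one small correction there: the paper invokes the 2018 theorem of \cite{La4} as is for $L^2$ data, so the improved range $p>2-\alpha$ of Theorem 1 of the present paper is not what this statement rests on.
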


In the CR Hartogs paper there is a corollary  about the Szeg\H{o} projection analogous to the Bergman-Szeg\H{o} construction, but we don't use it.

For a volume form on $\partial D$, we use 
$dV=\frac{dz}{z+\epsilon\overline w}\wedge dw \wedge d\overline{w}=\frac{dw}{w+\epsilon\overline z}\wedge dz\wedge d\overline{z}$. 
 
Let $\pi:L^2(\partial D)\rightarrow L^2(\partial D)$ be the orthogonal projection onto the space of functions which have holomorphic extensions on almost all slices $w=c$. Let $T$ be the operator on $L^2(\partial D)$ coming from $f(z,w)\rightarrow f(w,z)$. 

We need the following facts.
\begin{enumerate}
\item $\pi(f)$ is computed by taking the Szeg\H{o} projection on slices. This is because of the product form of the measure.
\item $T$ is self-adjoint and $T\circ \pi=\pi\circ T$. The first is obvious and the second follows from the symmetry of the domain. 
\item If $f(z,w)=f(e^{i\theta}z, e^{-i\theta}w)$ then $\pi(f)$ has the same symmetry. 
\end{enumerate}

For $\phi(zw)=zw$, let $G=\phi(D)$. 
We will demonstrate a Bergman-Szeg\H{o} formula for $G$, but it's easier to carry out the computations on $D$ directly.
\begin{prop} Let $S=\pi\circ(\frac{1}{2}(T+I))$.
Then for any $f\in L^2{D}$ satisfying $f(z,w)=f(e^{i\theta}z, e^{-i\theta}w)$, 
$$\lim_{n\rightarrow \infty}S^n(f)=g,$$
where $g$ is the orthogonal projection of $f$ onto the CR functions which are invariant under $(z,w)\rightarrow(e^{i\theta}z, e^{-i\theta} w)$. 
\end{prop}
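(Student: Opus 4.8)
\emph{Plan of proof.} The statement is an assertion about strong convergence of the powers of a bounded operator, so my first move would be to place everything in the Hilbert subspace $\mathcal H_0\subset L^2(\partial D)$ of functions invariant under the circle action $(z,w)\mapsto(e^{i\theta}z,e^{-i\theta}w)$. By the third of the three listed properties $\pi$ maps $\mathcal H_0$ into itself, and a direct check (the swap fixes $zw$ and sends $|z|^2-|w|^2$ to its negative) shows the operator $T$ does too; hence $Q=\frac{1}{2}(T+I)$ and $S=\pi\circ Q$ preserve $\mathcal H_0$, and it suffices to analyze $S$ on $\mathcal H_0$. Here $Q$ is the orthogonal projection onto the swap-symmetric functions $\{f:f(z,w)=f(w,z)\}$, and by the first listed property $\pi$ is the orthogonal projection onto those functions whose restrictions to slices $\{w=c\}$ extend holomorphically; I write $\mathcal M=\mathrm{ran}\,\pi$ and $\mathcal N=\mathrm{ran}\,Q$.

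The operator convergence is the soft part. By the second listed property $\pi$ commutes with $T$, hence with $Q$, so $S=\pi Q=Q\pi$ is self-adjoint and idempotent, i.e. the orthogonal projection onto $\mathcal M\cap\mathcal N$; then $S^n=S$ and a fortiori $S^nf\to Pf$ with $P=P_{\mathcal M\cap\mathcal N}$. Should one prefer not to invoke the commutation, the same conclusion follows from von Neumann's theorem on products of two orthogonal projections, which gives $(\pi Q)^n\to P_{\mathcal M\cap\mathcal N}$ in the strong operator topology; this is the exact analogue, for a product of projections, of the spectral-theorem argument used for Theorem 2. Either way the entire content of the proposition is reduced to identifying $\mathcal M\cap\mathcal N$ inside $\mathcal H_0$.

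This identification is the heart of the matter and the step I expect to be the main obstacle, since it is where the CR Hartogs theorem enters. If $f\in\mathcal M\cap\mathcal N$, then $f$ is swap-symmetric and extends holomorphically on almost every slice $\{w=c\}$; I claim it then extends holomorphically on almost every slice $\{z=c\}$ as well, because on $\{z=c\}$ symmetry gives $w\mapsto f(c,w)=f(w,c)$, which is precisely the (holomorphically extending) slice function of $f$ on $\{w=c\}$ evaluated at $w$. With holomorphic extension on almost every slice in both families, Theorem 4 (the CR Hartogs theorem of \cite{La4}) shows $f$ is CR, i.e. extends holomorphically to $D$; together with $f\in\mathcal H_0$ this exhibits $f$ as a circle-invariant CR function. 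Conversely, a circle-invariant CR function $F$ is holomorphic on $D$ and invariant under the action, so its expansion about the origin contains only monomials $z^jw^j$ and $F=h(zw)$ for a holomorphic $h$; such an $F$ is automatically swap-symmetric, so lies in $\mathcal N$, and holomorphic on every slice, so lies in $\mathcal M$. Hence $\mathcal M\cap\mathcal N\cap\mathcal H_0$ is exactly the space of circle-invariant CR functions and $P$ restricted to $\mathcal H_0$ is the orthogonal projection $f\mapsto g$ of the statement. The points that will require genuine care are the almost-everywhere bookkeeping in the symmetry argument and checking that the hypotheses of Theorem 4 are satisfied by a mere $L^2$ element of $\mathcal M\cap\mathcal N$ rather than by a smooth function; once these are in place the proposition follows.
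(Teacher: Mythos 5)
Your proposal is correct, and it handles the convergence half by a genuinely different (and sharper) route than the paper. The paper's entire proof is the remark that "the proof is essentially the same as for the Bergman--Szeg\H{o} formula," i.e.\ the Theorem~2 pattern: $S$ is a self-adjoint contraction (via fact~2), its norm is attained exactly on the circle-invariant CR functions (via Theorem~4), and the spectral theorem yields $S^nf\to g$. You instead observe that, granted fact~2, $\pi$ and $Q=\frac{1}{2}(I+T)$ are \emph{commuting orthogonal projections}, so $S=\pi Q$ is itself the orthogonal projection onto $\mathcal{M}\cap\mathcal{N}$ and $S^n=S$ identically --- a stronger conclusion than the stated limit, with no spectral theorem needed. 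Your von Neumann fallback is real added value: the commutation $T\circ\pi=\pi\circ T$ is the least transparent of the paper's three listed facts (as stated, $T\pi T$ is the projection attached to the \emph{other} slice family $\{z=c\}$, so the identity genuinely leans on the swap symmetry of $D$ and of $dV$ and merits more than the paper's one-line justification), whereas von Neumann's alternating projection theorem gives $(\pi Q)^n\to P_{\mathcal{M}\cap\mathcal{N}}$ strongly with no commutation hypothesis at all, insulating the proposition from that point. The identification half of your argument --- swap symmetry transports the holomorphic extensions from the slices $\{w=c\}$ to the slices $\{z=c\}$, Theorem~4 then makes every fixed vector CR, and conversely circle invariance forces a CR function to have the form $h(zw)$, hence to be swap-symmetric and slice-extendable --- is precisely the content the paper's one-line proof silently relies on, and you have spelled it out correctly; your cautionary flags are benign, since Theorem~4 is already stated for $L^2(\partial D)$ and the a.e.\ bookkeeping passes between the two slice families exactly as you describe.
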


The proof is essentially the same as for the Bergman-Szeg\H{o} formula. Now we interpret this for functions on $G$.
We start with a function $f$ on $G$. After lifting it to $\partial D$ and applying $S$, we get a function of $zw$ again which we can think of as a function on $G$.
Some elementary calculations show the following.
Let $\cal C$ be the collection of curves in $G$ which are obtained by projecting the slices $(w=c)\cap \partial D$.  Then 
$\cal C$ is a 1-parameter family of circles such that each point in $D$ is contained in two such circles.

Let us restate this as a theorem. What is remarkable here is that one uses exactly the usual Szeg\H{o} projection on each circle. 
\begin{theorem} Let $d\omega$ be the pushdown to $G$ of $dV$ on $\partial D$. Suppose $f\in L^2(G,d\omega)$.
Let $T$ be the operator whose output $Tf(\zeta)$ is the average of the Szeg\H{o} projections on the two circles from $\cal C$ containing $\zeta$ . Let $B_{2,d\omega} $ be the Bergman projection for $L^2(G,d\omega)$. Then $T^n\rightarrow B_{2,d\omega}$ in the strong operator topology. 

\end{theorem}

This theorem raises the following natural question. Consider the two curve families in the unit disc given by $|z|=r$ and 
$|\frac{z-a}{1-\overline{a}z}|, |a|<1$. 
Can one reconstruct weighted Bergman projections on the disc with a Bergman-Szeg\H{o} method?
This depends  on proving the analogue of the strip theorem---that functions which extend holomorphically from  each circle in these two curve families must be holomorphic. 
The author has  a proof for $L^{\infty}$ functions, and it probably can be extended to the case of $L^p, p>2$. However, the issue with applying the Lewy method falls apart at $p=2$ and this cannot be remedied by changing the weight of integration. The different geometry of the circle families at the boundary of the disc gets in the way.

For all theorems of this type, one possibility which we have not explored yet is to use $L^pL^q$ spaces, where you integrate over the curves in $L^q$ and then over the parameter in $L^p$. 
Perhaps one needs a different method entirely.
If one considers the original strip theorem without the modified weight, if it fails at $p=2$ there has to be a counterexample. 
That would be very interesting but probably the theorem is true with some better method of proof. 

\section{Assorted results and questions about operators associated to the strip problem}

Let $C$ be a curve for which the strip theorem holds as in \cite{La}. 
In addition, assume that $C$ is symmetric about the $x$-axis  and has a left right symmetry.
Let $f\in L^2_{\alpha}(S)$. Suppose $f(z)=f(-z)$.
Then we evidently have that $||T_1(f)||_{2,\alpha}=||T_2(f)||_{2,\alpha}$.   Do $T_1$ and $T_2$ respect even and odd? If so then norm equality holds for all functions.

A different issue is the "partial conjugate operator". 
In this case we must work with $L^2(S,\frac{dxdy}{\sqrt{1-y^2}}$. 
For simplicity, work with the circle. Choose the conjugate operator $u\rightarrow \widetilde{u}$ such that $\widetilde{u}(0)=0$. We consider this as an operator on the circle. We can construct operators $W_1$ and $W_2$ which are the left and right harmonic conjugate operators on the strip. Here are some questions.
\begin{enumerate}
\item Is it true that $||W_1(f)||_2$  and $||W_2(f)||_2$ are comparable? 
\item What is the spectrum of $W_i$?

\end{enumerate}
The second question may be related to the the question of whether the strip theorem has an approximate version. There are various ways to formulate a conjecture. Here is a simple version which assumes the strip theorem is true without the heavier weight---i.e. as in \cite{La3}.

\noindent{\bf Question:} Is there a way to estimate the distance to the Bergman space on $S$ with weight $\frac{dxdy}{\sqrt{1-y^2}}$ by integrating the distance to the Hardy space $H_2$ on each circle?
A positive answer to this question might have applications. 
\cite{La3}

One can say some concrete things about annihilators as well. We consider the case of the circle only, because the presentation is simpler and the results are optimal.

The strip theorem with the moving circle is true for 
$f\in L^p(S,\frac{dxdy}{\sqrt{1-y^2}}), p>2$. 
Given $L^p(S,\frac{dxdy}{\sqrt{1-y^2}})$ for any $1<p<\infty$ we can construct annihilators which are adapted to the strip problem.
Given $h(t,z)\in L^q({\bf R}\times S^1)$ with $h(t,\dot)\in H^q_0(S^1)$ (the 0 subscript denoting that the associated holomorphic function vanishes at 0), let $\widetilde{h}(z)=h(t_1,z)+h(t_2,z)$. Then 
$$\int_S f(z)\widetilde{h}(z)\frac{dx dy}{\sqrt{1-y^2}}=$$
$$\int_0^{\infty}\int_{S^1}f(t,z)h(t,z)d\theta dt=0.$$
Here $f(t,z)$ is the natural pullback of $f$ to the circle in the product.
We call the functions of the type $\widetilde h$ "strip annihilators".
The following proposition is an easy consequence of the strip theorem, by duality. Let $A_p(S)$ denote the $L^p$ Bergman space with weight $\frac{dxdy}{\sqrt{1-y^2}}$
\begin{prop} If $2<p<\infty$ and $q$ is the conjugate exponent, then the set of strip annihilators is dense in 
the space of annihilators to  $A_p(S)$.
\end{prop}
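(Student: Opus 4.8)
The plan is to deduce the statement from the strip theorem by a soft duality argument; the only analytic inputs beyond functional analysis are the unfolding identity displayed just above and the sliding-circle strip theorem in $L^p(S,\frac{dx\,dy}{\sqrt{1-y^2}})$, $p>2$. Write $w=\frac{dx\,dy}{\sqrt{1-y^2}}$. Since $1<p<\infty$, the space $L^p(S,w)$ is reflexive with dual $L^q(S,w)$, and $A_p(S)$ is a closed subspace; so by the bipolar theorem the closure of any set $\mathcal A\subseteq (A_p(S))^\perp$ equals $(A_p(S))^\perp$ if and only if its pre-annihilator ${}^\perp\mathcal A\subseteq L^p(S,w)$ is contained in $A_p(S)$. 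Taking $\mathcal A$ to be the set of strip annihilators---these do lie in $(A_p(S))^\perp$, since for holomorphic $f$ the displayed identity reduces the pairing to $\int_{S^1}f(t,\cdot)h(t,\cdot)\,d\theta$, which pairs Fourier modes $\ge 0$ against modes $\ge 1$ and so vanishes---it suffices to prove the converse: if $f\in L^p(S,w)$ annihilates every strip annihilator, then $f\in A_p(S)$.

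So fix such an $f$ and read the unfolding identity for this (not a priori holomorphic) $f$: the hypothesis becomes $\int_{{\bf R}}\int_{S^1}f(t,\theta)h(t,\theta)\,d\theta\,dt=0$ for every $h\in L^q({\bf R}\times S^1)$ with $h(t,\cdot)\in H^q_0(S^1)$ for a.e. $t$. I would localise in $t$ by specialising to product test functions $h(t,\theta)=\psi(t)g(\theta)$ with $\psi\in L^q({\bf R})$ and $g\in H^q_0(S^1)$; this gives $\int_{{\bf R}}\psi(t)\bigl(\int_{S^1}f(t,\theta)g(\theta)\,d\theta\bigr)\,dt=0$ for all $\psi$, whence $\int_{S^1}f(t,\theta)g(\theta)\,d\theta=0$ for a.e. $t$. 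Letting $g$ run over a countable dense subset of $H^q_0(S^1)$ and intersecting the exceptional null sets, I obtain that for a.e. $t$ the slice $f(t,\cdot)\in L^p(S^1)$ annihilates all of $H^q_0(S^1)$. The bilinear annihilator of $H^q_0(S^1)$ in $L^p(S^1)$ is exactly $H^p(S^1)$ (a function with $\int F\,e^{in\theta}\,d\theta=0$ for all $n\ge 1$ has vanishing negative Fourier modes), so this says precisely that $f|_{C_t}$ extends holomorphically to the disc $D_t$ for a.e. $t$.

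Now the sliding-circle strip theorem for $L^p(S,w)$, $p>2$, applies and yields that $f$ is holomorphic, i.e. $f\in A_p(S)$; combined with the first paragraph this proves the claimed density. The routine verifications are that $f|_{C_t}\in L^p(S^1)$ for a.e. $t$ (immediate from the norm equivalence between $\|f\|_{L^p(S,w)}$ and $\bigl(\int_{{\bf R}}\|f\|_{L^p(C_t)}^p\,dt\bigr)^{1/p}$ recorded after Definition 1), that $t\mapsto\int_{S^1}f(t,\cdot)g$ is an $L^p({\bf R})$ function so the $\psi$-test is legitimate, and that the unfolding identity holds for a general $f\in L^p$, not merely for holomorphic $f$. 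The one step I expect to require genuine care---the main obstacle---is the localisation: one must check that the field of subspaces $\{h:h(t,\cdot)\in H^q_0\}$ is rich enough that separable products $\psi\otimes g$ detect the slicewise condition, and that the null sets assembled over a countable dense family of $g$'s intersect to a single null set valid for all $g\in H^q_0(S^1)$ simultaneously. Once this is in place the proposition follows at once from the strip theorem and reflexive duality.
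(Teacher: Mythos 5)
Your proof is correct and is precisely the argument the paper intends: the paper states the proposition as "an easy consequence of the strip theorem, by duality," and your write-up fleshes out exactly that route --- reflexive $L^p$--$L^q$ duality plus the unfolding identity to reduce membership in the pre-annihilator to the slicewise condition $f|_{C_t}\in H^p(C_t)$, followed by the sliding-circle strip theorem for $p>2$ with weight $\frac{dx\,dy}{\sqrt{1-y^2}}$. The localisation via product test functions $\psi\otimes g$ and the countable-dense-family bookkeeping you flag are handled correctly and are indeed the only points needing care.
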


Now consider the annihilators in $L^q$ with $q>2$.

\begin{prop} For $q>2$, the map $h\rightarrow \widetilde{h}$ is 1-1.
\end{prop}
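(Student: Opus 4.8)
The plan is to run the two-step complexification argument behind Theorem 1 on the lift of $h$, using the hypothesis $\widetilde h=0$ in place of the slice-extension hypothesis. Since each $h(t,\cdot)\in H^q_0(S^1)$ already extends holomorphically to the disc $D_t$—write $g_t$ for the extension, so that $g_t$ is holomorphic on $D_t$ with $g_t(t)=0$—I would first form the measurable CR function $G(z,w)=g_t(z)$ on the circle manifold $M=\cup_t V_t$, exactly as $F$ was built from $f$. Because $q>2$, the growth estimate of Lemma 5 (in the unweighted, $\beta=0$ case) applies verbatim, so the Hans Lewy argument of the first step produces a holomorphic extension of $G$ to $\Omega=\cup_z\Omega_z$, the region obtained by filling in the fibers $M_z$ over the open upper and lower half-strips.

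The hypothesis $\widetilde h=0$ enters at the closing point of each fiber. For $z=x+iy$ not real the fiber $M_z$ is a simple closed curve whose two ends $t\to x\mp\sqrt{1-y^2}$ both map to $w=\overline z$, and there the two one-sided boundary values of $G$ are $g_{t_1}(z)$ and $g_{t_2}(z)$, where $C_{t_1},C_{t_2}$ are the two circles through $z$. The equation $\widetilde h(z)=g_{t_1}(z)+g_{t_2}(z)=0$ says these are negatives of one another; this is precisely the antisymmetry needed so that, after passing to the limit $y\to 0^{\pm}$ and opening the fibers into the lower and upper half $w$-planes, the resulting boundary data are compatible. I would then carry out the second step as in the circle case: using conformal maps of the fiber interiors and the moment functions $(\frac{w-i}{w+i})^n$, derive for a.e.\ $x$ the moment relations that place the limiting functions $\Phi^{\pm}$ in the Hardy space $H^1$ (with respect to $\frac{dt}{1+t^2}$) of the lower and upper half-planes, and glue them across the real axis by Cauchy's theorem into a single entire function $\Phi_x(w)$.

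Finally I would invoke the rigidity already used for the circle: an entire function lying in this $H^1$ on both half-planes must be constant. The extra information built into $H^q_0$—that the holomorphic extensions vanish at the centres, $g_t(t)=0$—translates, under the correspondence $t=x\Leftrightarrow w=\infty$ for the opened-up fiber, into $\Phi_x(\infty)=0$. Hence that constant is $0$, so $\Phi_x\equiv 0$ for a.e.\ $x$; unwinding the construction gives $g_t\equiv 0$ on a set of positive measure in each disc, hence $g_t\equiv 0$ for a.e.\ $t$, and therefore $h=0$. This is exactly what injectivity asserts.

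The hardest part will be the boundary analysis at the closing point $w=\overline z$: I must show that the antisymmetry $g_{t_1}(z)=-g_{t_2}(z)$ furnished by $\widetilde h=0$ is exactly enough to cancel the jump that would otherwise obstruct gluing $\Phi^+$ and $\Phi^-$ into an entire function, leaving no spurious singularity at the image of $w=\overline z$. I also need to check that the Lewy-extension estimates genuinely survive for the lift of a merely $L^q$ (rather than continuous) datum $h$ when $q>2$; this is where the restriction $q>2$ is essential and where the breakdown at $q=2$ is to be expected, paralleling the role of $p>2$ in Theorem 1.
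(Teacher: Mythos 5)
Your plan contains a genuine gap, and it sits exactly where you flagged it: the seam $w=\bar z$. The paper's construction of the lifted CR function requires that the value attached to a point of $M$ be independent of the leaf through it (``this $t$ is unique, so $F$ is well-defined a.e.''), and at the closing points $(z,\bar z)$, where the closures of the two leaves $V_{t_1}$ and $V_{t_2}$ meet, well-definedness holds in Theorem 1 precisely because both one-sided values equal the single function $f(z)$. For your lift $G$ the two one-sided values are $h(t_1,z)$ and $h(t_2,z)=-h(t_1,z)$: antisymmetry is the \emph{opposite} of the compatibility the machinery needs. The fibers $M_z$ are closed curves only because the two ends $t\to x\mp\sqrt{1-y^2}$ are identified at $w=\bar z$; with a sign flip there, $G$ is not a CR function on $M$, the endpoint contributions in the $\bar\partial$-computations behind the Lewy step (which involve the boundary values $h(t_i,z)$ at the moving endpoints $t_i(z,\bar z)$) no longer cancel, and nothing in your sketch shows that the fiberwise moment relations or the gluing of $\Phi^{\pm}$ across the real axis survive the jump. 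You assert that the antisymmetry ``is precisely what is needed'' for compatibility, but you give no argument, and there is no reason to expect one: compatible boundary data at the seam means equal values, not opposite ones.

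The paper sidesteps all of this with a short squaring trick. From $\widetilde h=0$ one gets $h(t_1,z)=-h(t_2,z)$ a.e., so $H(z):=(h(t_1,z))^2=(h(t_2,z))^2$ is a well-defined, single-valued function on $S$; its restriction to each $C_t$ extends holomorphically (as $g_t^2$), and $H\in L^{q/2}$ with $q/2>1$, so the weighted strip theorem (Theorem 1, with $\alpha$ chosen so that $q/2>2-\alpha$, $\tfrac12\le\alpha<1$) applies and makes $H$ holomorphic on the strip. Since $h(t,\cdot)\in H^q_0(S^1)$, each extension $g_t$ vanishes at the center $t$, so $H(t)=g_t(t)^2=0$ for a.e.\ real $t$; a holomorphic function vanishing on the real axis is identically zero, whence $h=0$. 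Note that this also corrects your reading of where $q>2$ enters: it is not needed to run Lemma 5 on $h$ itself, but to guarantee that the squared function has exponent $q/2>1$ within reach of the weighted theorem---indeed for $2<q\le 4$ one has $q/2\le 2$, so the heavier weight of Theorem 1, rather than the unweighted $p>2$ strip theorem, is genuinely required. Your observation that $g_t(t)=0$ should force the final function to vanish is the right closing idea, but in the paper it is implemented through $H$ on the strip, not through a value at $w=\infty$ in an extension scheme that has not been shown to exist.
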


\begin{proof}
Suppose that $h\rightarrow \widetilde{h}$ is zero for some nonzero $h$. This means that $h(t_1, z)=h(t_2,z)$ a.e.
Then $H(z)=(h(t_1,z))^2=(h(t_2,z))^2 $ is well defined on $S$ and satisfies the hypothesis of the strip theorem, making it actually analytic. We also have that $H$ vanishes on the real axis because the $h(t,\dot)$'s are annihilators. So $H$ is 0, which means $h$ also is 0. 
\end{proof}

What about the case $p=2$? If you knew the strip problem was true for the circle for $p=2$ without the heavier weight, then we could say the map $h\rightarrow \widetilde{h}$ is 1-1 with dense image. 
If the image were closed we would have strong consequences.
That may be too strong, but there are two possibilities.
Maybe if you change the weight in the image space, you can get a closed range; alternatively, there might be an $L^p$ to $L^q$ estimation of some sort.

\bibliographystyle{plain}

\end{document}